\theoremstyle{definition}
\theoremstyle{plain}
\newtheorem{prop}[subsection]{Proposition}
\newtheorem{thm}[subsection]{Theorem}
\newtheorem{lem}[subsection]{Lemma}
\newcommand{\Hom}{\mathrm{Hom}}
\newcommand{\mbf}{\mathbf}
\newcommand{\mbb}{\mathbb}
\newcommand{\mrm}{\mathrm}
\newcommand{\A}{\mathcal  A}
\newcommand{\q}{\sqrt q}
\newcommand{\C}{\mathscr C}
\newcommand{\I}{\mathcal I}
\newcommand{\End}{\mrm{End}}
\title[]{A geometric construction of generalized $q$-Schur algebras }
\author{Stephen Doty}
\address{Mathematics and Statistics\\ Loyola University Chicago\\ Chicago, IL 60626, USA}
\email{doty@math.luc.edu}
\author{  Yiqiang Li}
\address{Department of Mathematics\\   University  at Buffalo, SUNY\\
244 Mathematics Building, Buffalo, NY 14260}
\email{yiqiang@buffalo.edu}
\date{\today}
\keywords{} 
\subjclass{}
\begin{document}
\begin{abstract}
  We show that the algebras $\C(X\times X)$ in ~\cite{Li10} and
  $\mathscr L_d$ in ~\cite{Li12} are generalized $q$-Schur algebras as
  defined in ~\cite{D03}.  
  This provides a geometric construction of generalized $q$-Schur algebras in types $\mbf A,\mbf  D$ and $\mbf E$.
  We give a parameterization of Nakajima's
  Lagrangian quiver variety of type $\mbf D$ associated to a certain
  highest weight.
\end{abstract}

\maketitle

\section*{Introduction}
Be{\u\i}linson, Lusztig, and MacPherson \cite{BLM} gave a geometric
construction of the $q$-Schur algebra in type $\mbf A$ in terms of the
relative position of pairs of flags on a finite dimensional vector space
over a finite field of $q$ elements. (See also \cite{Du95}.) The question
naturally arises: can a similar construction be made in other types? We
will show that this question admits a positive answer in types $\mbf A, \mbf D$, and 
$\mbf E$.

Generalized Schur algebras for arbitrary type were introduced by
S.~Donkin in \cite{Don86}, and their $q$-analogues (the generalized
$q$-Schur algebras) were studied in \cite[29.2]{L93}, \cite{DS94}, and
\cite{D03}.

In ~\cite{Li10}, a finite dimensional quotient $\C$ of the quantum
algebra of type $\mbf D_{m+2}$ is constructed geometrically by using
functions on pairs of ramified partial flag varieties.  This algebra
enjoys many properties similar to that of generalized $q$-Schur
algebras constructed algebraically in ~\cite{D03}. We show that the
algebra $\C$ is a certain generalized $q$-Schur algebra of type $\mbf
D_{m+2}$; hence this generalized $q$-Schur algebra admits a geometric
construction.  We first use an argument similar to that in
~\cite[2.26, 2.27]{L03} to show that there is a surjective algebra
homomorphism from a generalized $q$-Schur algebra in ~\cite{D03} to
$\C$. Then we apply the fact that (the rational form of) generalized
$q$-Schur algebras are semisimple to obtain the injectivity result.

We also explain how a similar argument can be used to show that the
algebra $\mathscr L_d$ in ~\cite{Li12} is isomorphic to the integral
form of a generalized $q$-Schur algebra of finite type.  In the final
part of this note, we obtain a parametrization of Nakajima's
Lagrangian quiver variety of type $\mbf D_{m+2}$ by using the
connected components of the ramified partial flag variety studied in
this note.

\section{The algebras $\C$, $\dot{\C}$} 

\subsection{} \label{typeD}
Recall from ~\cite{Li10} that we have the following data.
\begin{itemize}

\item A Dynkin graph of type $\mbf D_{m+2}$:

\xymatrix{
& & i  \ar @{-}[d] \\
&      k   &  \ar@{-}[l]  j_1 \ar @{-}[r]  & j_2 \ar@{-}[r] &\cdots \ar@{-}[r]&j_m,
}
whose vertex set is denoted by $I$ and the associated Cartan matrix is $C=(c_{ab})_{a, b\in I}$.

\item A finite field $\mbb F_q$  of $q$ elements.

\item A fixed $d$-dimensional vector space $D$ over $\mbb F_q$.

\item A set $X$ of all `ramified' flags in $D$ of the form
\[
0\subseteq V_{j_m} \subseteq \cdots \subseteq V_{j_1} \subseteq U_i, U_k \subseteq U_{j_1} \subseteq \cdots \subseteq  U_{j_m} \subseteq D.
\]

\item A partition $X=\sqcup_{\nu\in \mbb N[I]} X_{\nu}$ of $X$, where $X_{\nu}$ contains all flags $U\in X$ subject to the conditions: 
\[
\dim U_i =\nu_i, \dim U_k =\nu_k, \dim U_{j_{\beta}} +\dim V_{j_{\beta}} =\nu_{j_{\beta}}, \quad \forall 1\leq \beta \leq m.
\]

\end{itemize}
Notice that $X_{\nu}$ is empty for all but finitely many $\nu$ in $\mbb N[I]$.  
As in ~\cite{Li10}, we consider the $\mbb C$-vector space $\C'(X\times X)$ of all $\mbb C$-valued functions on  $X\times X$. 
The vector space $\C'(X\times X)$ admits an associative algebra structure with the multiplication given by the convolution product:
\[
f_1 \circ f_2 (U, \tilde U) =\sum_{U'\in X} f_1(U, U') f_2 (U', \tilde U),
\quad \forall f_1, f_2\in \C'(X\times X); U, \tilde U\in X.
\]
The algebra $\C' (X\times  X)$ has a unit $\mbf 1$ defined by 
\begin{equation*}
 \mbf 1 (U, \tilde U) = 
\begin{cases} 
1,    & \mbox{if} \;   U = \tilde U,\\
0, & \mbox{otherwise}.
\end{cases}
\end{equation*}

For convenience, we shall write $|V|$ for the dimension of a given vector space $V$. We shall write ``$U\overset{a}{\subset} \tilde U$'' to denote that
$U$ is contained in $\tilde U$ and $U$ is one-dimension short of $\tilde U$ at the position $a\in I$.  The notation 
``$U \overset{j_1}{\subset} \tilde U, V_{j_1} \overset{1}{\subset} \tilde V_{j_1}$'' 
denotes  a pair $(U, \tilde U)$ in $X\times X$ satisfying 
that $U_a= \tilde U_a$ for any $a\in I$, $V_a =\tilde V_{a} $ for any $a\neq j_1$,  $V_{j_1} \subset \tilde V_{j_1}$ and 
$\dim V_{j_1} +1 = \dim \tilde V_{j_1}$. 

We are mainly interested in the subalgebra $\C \equiv \C(X\times X)$ generated by the following  functions $E_a$, $F_a$, and $K_{a}^{\pm 1}$ for any $a\in I$.  For $a= i, k$, the functions $E_a$ and $F_a$ are defined by
\begin{equation*}
\begin{split}
 E_a (U, \tilde U) = 
\begin{cases} 
\q^{-(| \tilde U_{j_1} | - | \tilde U_a | )},   & \mbox{if} \;   U \overset{a} {\subset} \tilde U,\\
0 & \mbox{otherwise}, 
\end{cases} \quad 
 F_a (U, \tilde U) =
\begin{cases}
\q^{-( |\tilde U_a | - | \tilde V_{j_1} | )},   & \mbox{if} \; U \overset{a} {\supset} \tilde U,\\
0 & \mbox{otherwise}.
\end{cases}
\end{split}
\end{equation*}
For $a= j_{\beta}$ and $1\leq \beta\leq m$, the functions $E_a$ and $F_a$ are defined as follows. 
\begin{equation}
\begin{split}
& E_{j_1} (U, \tilde U) =
\begin{cases}
\q^{- ( |\tilde U_i| + |\tilde U_k| - |\tilde V_{j_1} | - | \tilde V_{j_2}|)},   
& \mbox{if}\; U \overset{j_1}{ \subset}  \tilde U, V_{j_1} \overset{1}{\subset} \tilde V_{j_1},\\
\q^{- ( |\tilde U_{j_2} | +2 |\tilde V_{j_1} | - | \tilde U_{j_1}| ) },           
&  \mbox{if}\; U \overset{j_1}{\subset} \tilde U, U_{j_1} \overset{1}{\subset} \tilde U_{j_1},\\
0, & \mbox{otherwise.}
\end{cases}\\
&E_{j_{\beta}} (U, \tilde U) = 
\begin{cases}
\q^{-( |\tilde U_{j_{\beta-1}} | - |\tilde V_{j_{\beta}} |  - | \tilde V_{j_{\beta+1}} | ) },  
& \mbox{if}\;  U\overset{j_{\beta}}{\subset}  \tilde U,  V_{j_{\beta}} \overset{1}{\subset} \tilde V_{j_{\beta}},\\
\q^{ - ( | \tilde U_{j_{\beta+1}}| +2 |\tilde V_{j_{\beta}}| - | \tilde U_{j_{\beta}}| - | \tilde V_{j_{\beta-1}} | ) },  
& \mbox{if}\; U\overset{j_{\beta}}{\subset}  \tilde U,  U_{j_{\beta}} \overset{1}{\subset} \tilde U_{j_{\beta}},\\
0, & \mbox{otherwise.}
\end{cases}
\end{split}
\end{equation}

\begin{equation}
\begin{split}
& F_{j_1} (U, \tilde U) =
\begin{cases}
\q^{-( |\tilde U_{j_2}| - 2 | \tilde U_{j_1} | + |\tilde V_{j_1} |)}, 
& \mbox{if} \;  U \overset{j_1}{\supset} \tilde U, V_{j_1} \overset{1}{\supset} \tilde V_{j_1},\\
\q^{-( |\tilde U_{j_1}| - |\tilde U_i| - |\tilde U_k| -|\tilde V_{j_2} | )}, 
& \mbox{if} \;  U \overset{j_1}{\supset} \tilde U, U_{j_1} \overset{1}{\supset} \tilde U_{j_1},\\
0, & \mbox{otherwise.}
\end{cases}\\
&F_{j_{\beta}} (U, \tilde U) =
\begin{cases}
\q^{-(|\tilde U_{j_{\beta+1}}| -2 |\tilde U_{j_{\beta}}| + |\tilde V_{j_{\beta}}| +| \tilde V_{j_{\beta-1}} |)},
& \mbox{if}\;  U \overset{j_{\beta}} {\supset} \tilde U, V_{j_{\beta}} \overset{1}{\supset} \tilde V_{j_{\beta}},\\
\q^{-(-|\tilde V_{j_{\beta+1}}| + |\tilde U_{j_{\beta}} | - |\tilde U_{j_{\beta-1}}|)}, 
&\mbox{if}\;  U \overset{j_{\beta}} {\supset} \tilde U, U_{j_{\beta}} \overset{1}{\supset} \tilde U_{j_{\beta}},\\
0, & \mbox{otherwise.}
\end{cases}
\end{split}
\end{equation}
The functions $K_a^{\pm 1}$ are given by 
\begin{equation}
\label{K}
\begin{split}
&K_a^{\pm 1} (U, \tilde U)=
\begin{cases}
\q^{\pm ( |\tilde U_{j_1}| +|\tilde V_{j_1}| - 2 |\tilde U_a| ) },   & \mbox{if} \; U=\tilde U, a=i \, \mbox{or} \, k,\\
\q^{\pm (|\tilde U_{j_2}| + |\tilde V_{j_2}| + |\tilde U_i| +|\tilde U_k| -2 |\tilde U_{j_1}| -2|\tilde V_{j_1}| ) },   & \mbox{if} \; U=\tilde U, a=j_1,\\
\q^{\pm ( |\tilde U_{j_{\beta+1}}| +|\tilde V_{j_{\beta+1}}| + |\tilde U_{j_{\beta-1}}| +|\tilde V_{j_{\beta-1}}|- 2 |\tilde U_{j_{\beta}}| - 2|\tilde V_{j_{\beta}}| )},  
     & \mbox{if} \; U=\tilde U, a=j_{\beta},\\
0  & \mbox{if}\; U\neq \tilde U,
\end{cases}\\
\end{split}
\end{equation}
where $\beta$ runs from $2$ to $m$.

In addition to the above functions, we define 
\begin{equation*}
 \mbf 1_{\nu} (U, \tilde U) = 
\begin{cases} 
1,    & \mbox{if} \;   U = \tilde U \in X_{\nu}, X_{\nu} \; \mbox{nonempty},\\
0, & \mbox{otherwise},
\end{cases}
\end{equation*}
for any $\nu\in \mbb N[I]$. It is clear that $\mbf 1_{\nu} \mbf 1_{\nu'} =\delta_{\nu, \nu'} \mbf 1_{\nu}$ and  
\begin{equation}
\label{identity}
\mbf 1 =\sum_{\nu\in \mbb N[I]} \mbf 1_{\nu}.
\end{equation}
Let $\dot{\C}$ be the subalgebra of $\C'(X\times X)$ generated by 
the functions $\mbf 1_{\nu}$, $E_a  \mbf 1_{\nu}$ and $F_a \mbf 1_{\nu}$ for any $\nu\in \mbb N[I]$ and $a\in I$. 
By (\ref{identity}), we see that the algebra $\dot{\C}$ is unital.  This fact implies that  we have 
$\C \subseteq \dot{\C}$. 
Moreover, we have

\begin{lem}
$\C =\dot{\C}$. 
\end{lem}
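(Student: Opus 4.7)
The plan is to prove the reverse inclusion $\dot{\C} \subseteq \C$, since $\C \subseteq \dot{\C}$ is already in the excerpt. Because $\dot{\C}$ is generated by $\mbf 1_\nu$, $E_a \mbf 1_\nu$ and $F_a \mbf 1_\nu$, and since $E_a, F_a \in \C$, it suffices to show that $\mbf 1_\nu \in \C$ for every $\nu \in \mbb N[I]$ with $X_\nu \neq \emptyset$; the products $E_a \mbf 1_\nu$ and $F_a \mbf 1_\nu$ then follow at once.

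To locate $\mbf 1_\nu$ inside $\C$, I would exploit the fact that the $K_a^{\pm 1}$ are commuting diagonal operators that are constant on each block $X_\nu$. Indeed, reading (\ref{K}) and replacing $|U_{j_\beta}| + |V_{j_\beta}|$ by $\nu_{j_\beta}$ and $|U_i|, |U_k|$ by $\nu_i, \nu_k$ on $X_\nu$, one checks directly that
\[
K_a\, \mbf 1_\nu \;=\; \q^{-(C\nu)_a}\, \mbf 1_\nu, \qquad a\in I,
\]
where $C = (c_{ab})$ is the Cartan matrix of type $\mbf D_{m+2}$ (with the convention $\nu_{j_{m+1}} = 0$ at the tail of the type $\mbf A$ arm). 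Let $\Lambda = \{\nu \in \mbb N[I] : X_\nu \neq \emptyset\}$, which is finite. Since $\mbf D_{m+2}$ is of finite type, $C$ is nonsingular, and since $\q > 1$ the assignment $\nu \mapsto (\q^{-(C\nu)_a})_{a\in I}$ is injective on $\Lambda$. Thus the tuples of $K$-eigenvalues labelling distinct blocks are pairwise distinct points of $\mbb C^{|I|}$.

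Multivariate polynomial interpolation at this finite configuration of distinct points then yields, for each $\nu \in \Lambda$, a polynomial $p_\nu$ in $|I|$ commuting variables with $p_\nu\bigl((\q^{-(C\nu')_a})_a\bigr) = \delta_{\nu, \nu'}$ for all $\nu' \in \Lambda$. Substituting the pairwise commuting operators $K_a$ gives
\[
p_\nu\bigl((K_a)_{a\in I}\bigr) \;=\; \sum_{\nu' \in \Lambda} \delta_{\nu, \nu'}\, \mbf 1_{\nu'} \;=\; \mbf 1_\nu,
\]
placing $\mbf 1_\nu$ in $\C$. The only non-routine step is the separation of $K$-eigenvalues across blocks, and this reduces immediately to nondegeneracy of the finite-type Cartan matrix together with strict monotonicity of $n \mapsto \q^{-n}$ on $\mbb Z$, so I do not anticipate any genuine obstacle.
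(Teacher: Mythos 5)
Your argument is correct and is essentially the paper's own: both exploit that the commuting operators $K_a^{\pm1}$ act as scalars on each block $X_\nu$, that the resulting joint eigenvalue tuples separate the finitely many nonempty blocks because the finite-type Cartan matrix is nonsingular and $\q>1$, and then recover each $\mbf 1_\nu$ by interpolation (the paper first reduces to one variable via a generic integer vector $(n_a)$ and inverts a Vandermonde matrix, whereas you interpolate in $|I|$ variables directly -- a cosmetic difference). The one slip is that the eigenvalue of $K_a$ on $X_\nu$ is $\q^{\,d\delta_{a,j_m}-(C\nu)_a}$ rather than $\q^{-(C\nu)_a}$, since the top space of the flag at the end of the arm is $D$ (contributing $d$, not $0$); as this shift is independent of $\nu$ it affects neither the separation of eigenvalues nor the interpolation.
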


\begin{proof}
We only need to show that $\mbf 1_{\nu}$ is in $\C$ for any $\nu\in \mbb N[I]$.  
This can be shown by an argument similar to the proof of Lemma 2.24 in ~\cite{L03}. 
For the sake of completeness, we shall provide the proof here.  Note that one can also prove this Lemma by an argument similar to the proof of Lemma 3.2 (i) in ~\cite{D03}.

Since the functions    $\mbf 1_{\nu}$ for any $\nu\in \mbb N[I]$  such that $\mbf 1_{\nu}\neq 0$ are orthogonal idempotents, we have 
\begin{equation}
\label{linear}
\prod_{a\in I} K_a^{n_a}  = \sum_{\nu\in \mbb N[I]} \q^{\sum_{a\in I} n_a b_{a, \nu}} \mbf 1_{\nu},
\end{equation}
for any $(n_a)\in \mbb Z^I$ where  $b_{a,\nu}$ are  the exponents of $\q$ in the definition of $K_a$. 
The sum  $\sum_{a\in I} n_a b_{a, \nu}$ can be rewritten as 
\[
\sum_{a\in I} d_a n_a + (n_a) \cdot C \nu, 
\]
where $C\nu$ is a vector in $\mbb Z^I$ whose  $a$-th component is equal to $\sum_{b\in I} c_{ab}\nu_b$ and the dot  is the standard inner product of two vectors. 
So the identity  (\ref{linear}) can be rewritten as 
\begin{equation}\label{linear-2}
\sum_{\nu\in \mbb N[I]}  \q^{ (n_a)\cdot C \nu} \mbf 1_{\nu}=
\q^{-\sum_{a\in I} d_a n_a } \prod_{a\in I} K_a^{n_a}.
\end{equation}
It is enough to show that we can find a vector $(n_a) \in\mbb Z[I]$ such that 
$(n_a) \cdot C \nu \neq (n_a)\cdot  C \nu'$ for any $\nu\neq \nu'$ such that $1_{\nu}$ and $1_{\nu'}$ are not zero.
This is because if such a vector $(n_a)$ exists, we can form together with (\ref{identity}) a linear system from (\ref{linear-2}) by considering the vectors
$(cn_a)$ for $c\in \mbb N$. It is clear that the  associated  coefficient matrix of the linear system is the Vandermonde matrix. Now by choosing the right number of the integers $c$,  we can get a square Vandermonde matrix which is invertible by our choice of the vector $(n_a)$.  This implies that 
$\mbf 1_{\nu}$ can be expressed as a linear combination of the functions $\q^{-\sum_{a\in I} d_a n_a } \prod_{a\in I} K_a^{n_a}$.

We return to the proof of the existence of such a vector $(n_a)$.  
Since $A$ is positive definite, the vector $C(\nu -\nu')$ is non zero for any $\nu\neq \nu'$. 
Since there are only finitely many  $\nu$ such that $\mbf 1_{\nu}$ is nonzero, we see that 
the collection of vectors $C(\nu-\nu')$  such that $\mbf 1_{\nu}$ and $\mbf 1_{\nu'} $ are non zero is finite. 
A standard argument  in linear algebra shows that we can find a vector
$(n_a) \in \mbb Z^{I}$ satisfying the requirement. The Lemma follows.
\end{proof}

\subsection{}

Let $\mbf U_{\q}$ be the specialization of the quantum algebra of type $\mbf D_{m+2}$ at $\q$. 
This is an associative algebra over $\mbb C$ generated by the symbols $E_a$, $F_a$ and $K_a^{\pm 1}$ for $a\in I$ and subject to the following 
defining relations.
\begin{align*}
&K_aK^{-1}_a=1,  K_aK_b=K_bK_a.\\
&K_aE_b=\q^{c_{ab}}E_bK_a,   K_aF_b=\q^{-c_{ab}}F_bK_a. \\
&E_aF_b-F_bE_a=\delta_{ab} \frac{K_a-K^{-1}_a}{\q-\q^{-1}}. \\
&E_a^2 E_b -(\q+\q^{-1}) E_a E_b E_a + E_b E_a^2=0;\\
  & F_a^2F_b -(\q+\q^{-1}) F_a F_bF_a + F_b F_a^2=0, & &\mbox{if}\; c_{ab}=-1.\\
&E_aE_b=E_bE_a,   F_aF_b=F_bF_a  &&\mbox{if} \quad c_{ab}=0.
\end{align*}

Let $L(\lambda)$ be the simple $\mbf U_{\q}$-module  of highest-weight $\lambda =\sum_{a\in I} \lambda_a a\in \mbb N[I]$. 
This is a $\mbf U_{\q}$-module generated by a vector $\xi_{\lambda}$ and  subject to the condition:
\[
K_a \xi_{\lambda} = \q^{\lambda_a} \xi_{\lambda}, \quad
E_a \xi_{\lambda} =0, \quad \forall a\in I. 
\]
We denote by $\I_D$ the two-sided ideal of $\mbf U_{\q}$ consisting of all elements $u$ in $\mbf U_{\q}$ such that $u. L(\lambda)=0$ for any $\lambda \in \mbb N[I]$ 
satisfying $\lambda = d j_{m} - C \nu$ for some $\nu\in \mbb N[I]$.  
Note that the quotient algebra $\mbf U_{\q}/\I_D$ is a generalized $q$-Schur algebra studied in ~\cite{D03} 
with the saturated set $\pi$ generated by the dominant weight $d j_m$ (or rather $d \omega_{j_m}$ in the notation of ~\cite{D03}). 
Indeed, it can be shown that 
\begin{equation}
\label{pi}
\pi =\{\lambda\in \mbb N[I] | \lambda =d j_{m} - C\nu, \quad \forall \nu\in \mbb N[I]\}. 
\end{equation}
 In the language of ~\cite{D03}, $\pi =\{ \lambda \in X^+| \lambda = d \omega_{j_m} - \sum_{a\in I} \nu_a \alpha_a, \nu_a\in \mbb N\}$. 
Note that $\pi$ is a finite set.
Indeed, a necessary condition for $\nu\in \mbb N[I]$ subject to  $dj_m - C\nu \in \mbb N[I]$  is that 
\begin{equation}\label{necessary}
\nu_i+ \nu_k \leq \nu_{j_1}\leq \cdots \leq \nu_{j_m} \leq d. 
\end{equation}
(A direct computation shows that $\pi$ is cofinal in the case when $m=2$, and is not in the case when $m>2$.)

Recall from ~\cite{Li10} that we have a surjective  algebra homomorphism 
\[
\Phi: \mbf U_{\q} \to \C,
\]
sending the generators in $\mbf U_{\q}$ to the respective elements in $\C$.   We have 

\begin{lem}
\label{factor}
The morphism $\Phi$ factors through a surjective algebra homomorphism 
\[
\Psi: \mbf U_{\q}/ \I_D \to \C.
\] 
\end{lem}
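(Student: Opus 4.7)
The plan is to study the natural action of $\C$ on the finite-dimensional space $\C'(X)$ of $\mbb C$-valued functions on $X$ given by convolution, $(f\cdot g)(U)=\sum_{U'}f(U,U')g(U')$, and then to use semisimplicity of finite-dimensional $\mbf U_{\q}$-modules to show that every simple summand of this module has highest weight in $\pi$. Composing with $\Phi$ makes $\C'(X)$ into a $\mbf U_{\q}$-module, and the matrix-coefficient identity $(\Phi(u)\cdot\delta_{\tilde U})(U)=\Phi(u)(U,\tilde U)$ shows that $\Phi(u)=0$ precisely when $u$ annihilates $\C'(X)$. Since every $u\in\I_D$ annihilates $L(\lambda)$ for each $\lambda\in\pi$ by definition, it will be enough to decompose $\C'(X)$ as a direct sum of such $L(\lambda)$'s.

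The heart of the argument is a weight computation. For $U\in X_{\nu}$ the delta function $\delta_U$ is a $K$-eigenvector, and reading off the diagonal entries in (\ref{K}) (with the natural convention $|U_{j_{m+1}}|+|V_{j_{m+1}}|=d$ in the formula for $K_{j_m}$) one verifies directly that $K_a\delta_U = \q^{(dj_m-C\nu)_a}\delta_U$ for every $a\in I$. Hence every weight vector of $\C'(X)$ has weight of the form $dj_m-C\nu'$ with $\nu'\in\mbb N[I]$ and $X_{\nu'}\neq\emptyset$; in particular every $K_a$-eigenvalue is an integer power of $\q$, so $\C'(X)$ is a finite-dimensional type-$1$ module.

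Because $\q>1$ is not a root of unity, the category of finite-dimensional type-$1$ $\mbf U_{\q}$-modules is semisimple, and consequently $\C'(X)\cong\bigoplus_{\lambda} L(\lambda)^{\oplus m_{\lambda}}$. Each highest weight $\lambda$ with $m_{\lambda}>0$ occurs as a weight of $\C'(X)$, so has the form $dj_m-C\nu'$, and is dominant by virtue of being the highest weight of a simple module; comparison with (\ref{pi}) then forces $\lambda\in\pi$. Thus every $u\in\I_D$ acts as zero on $\C'(X)$, whence $\Phi(u)=0$ by the faithfulness observation, and $\Phi$ factors through a homomorphism $\Psi:\mbf U_{\q}/\I_D\to\C$ that is surjective because $\Phi$ is. The one non-trivial ingredient is the weight identification in the second paragraph; this is precisely the compatibility that the shifts in the exponents of the generators of $\C$ are designed to produce, and the remaining steps follow the pattern of \cite[2.26, 2.27]{L03} referenced in the Introduction.
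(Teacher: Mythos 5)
Your proof is correct and follows essentially the same route as the paper's: the key point in both is the weight computation showing that $K_a$ acts on the $\nu$-component by $\q^{(dj_m - C\nu)_a}$, so that all weights occurring lie in $dj_m - C\,\mbb N[I]$ and complete reducibility then forces every simple constituent to have highest weight in $\pi$, which $\I_D$ annihilates by definition. The paper runs this on $\C$ itself via the decomposition $\C=\oplus_{\nu,\tilde\nu}\C(X_\nu\times X_{\tilde\nu})$ and leaves the semisimplicity step implicit, whereas you run it on the faithful module $\C'(X)$ and spell that step out; the two are interchangeable.
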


\begin{proof}
With respect to the partition $X=\sqcup X_{\nu}$, the algebra $\C$ admits a decomposition 
\[
\C =\oplus_{\nu, \tilde\nu} \C(X_{\nu} \times X_{\tilde \nu}), 
\]
where $\C(X_{\nu}\times X_{\tilde \nu})= \C \cap \C' (X_{\nu} \times X_{\tilde \nu})$. It is clear from the definitions that 
\[
K_a f = \q^{\delta_{a, j_m} d -  C\nu} f, \quad \forall a\in I, f\in  \C(X_{\nu}\times X_{\tilde \nu}). 
\]
This implies that $\I_D \subseteq \ker (\Phi)$.  The Lemma follows.
\end{proof}

\begin{thm}\label{main}
The algebra homomorphism $\Psi: \mbf U_{\q}/ \I_D \to \C$ is an isomorphism. 
\end{thm}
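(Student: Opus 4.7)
The surjectivity of $\Psi$ is already established in Lemma~\ref{factor}, so the work lies in proving injectivity. The plan is to exploit the semisimplicity of $A := \mbf U_{\q}/\I_D$. Since $\q$ is not a root of unity in $\mbb C$ and the set $\pi$ in (\ref{pi}) is a finite saturated set of dominant weights, the generalized $q$-Schur algebra $A$ is semisimple over $\mbb C$ (cf.~\cite{D03}), with $A \cong \prod_{\lambda \in \pi} \End_{\mbb C}(L(\lambda))$ as algebras. Since $\Psi$ is surjective and its source is semisimple, the two-sided ideal $\ker \Psi$ is automatically a direct summand, so $\ker \Psi = \bigoplus_{\lambda \in S} \End_{\mbb C}(L(\lambda))$ for some subset $S \subseteq \pi$, and it suffices to prove $S = \emptyset$.

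To show $S = \emptyset$, the strategy is to realize each simple $L(\lambda)$ with $\lambda \in \pi$ as a direct summand of some naturally occurring $\C$-module: this will force the central idempotent $e_{\lambda} \in A$ cutting out $L(\lambda)$ to have nonzero image in $\C$, hence $\lambda \notin S$. The natural candidate is the left $\C$-module $\C'(X) = \bigoplus_{\mu} \C'(X_{\mu})$, with $\C$ acting through its embedding in $\C'(X \times X)$ by convolution against column vectors. For each $\lambda = d j_m - C\nu \in \pi$ one would choose a $\GL(D)$-orbit $\mathcal O \subseteq X_{\nu}$ corresponding to the most degenerate ramified flag compatible with the numerical data of $\nu$ (for instance, the orbit on which $V_{j_{\beta}} = 0$ for every $\beta$, so that $|U_{j_{\beta}}| = \nu_{j_{\beta}}$), and verify by direct computation with the formulas of \S\ref{typeD} that the characteristic function $\mbf 1_{\mathcal O}$ has $K_a$-eigenvalue $\q^{\lambda_a}$ and satisfies $E_a \cdot \mbf 1_{\mathcal O} = 0$ for every $a \in I$. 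By semisimplicity of $A$, the cyclic $\C$-submodule of $\C'(X)$ generated by $\mbf 1_{\mathcal O}$ then contains $L(\lambda)$ as a direct summand.

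The hard part will be the geometric step: one must check that for each $\lambda \in \pi$ a suitable orbit $\mathcal O$ exists in some $X_{\nu}$ with $\lambda = dj_m - C\nu$ (that is, that the necessary restrictions (\ref{necessary}) are in fact sufficient for the existence of a ramified flag realizing the required highest weight), and then carry out the case-by-case verification of $E_a \cdot \mbf 1_{\mathcal O} = 0$ for every $a \in I$, using the piecewise formulas for the $E_a$ in \S\ref{typeD}. This is a type-$\mbf D$ analogue of Lusztig's argument in \cite[2.26, 2.27]{L03}; once it is carried out, $S = \emptyset$ and $\Psi$ is injective, as desired.
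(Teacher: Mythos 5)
Your first two steps coincide with the paper's: surjectivity comes from Lemma~\ref{factor}, and the semisimplicity $\mbf U_{\q}/\I_D \simeq \bigoplus_{\lambda\in\pi}\End(L(\lambda))$ (Propositions 3.8 and 3.10 of \cite{D03}) reduces injectivity to showing that no block $\End(L(\lambda))$, $\lambda\in\pi$, lies in $\ker\Psi$. Where you diverge is in the last step, and that is where the gap is. The paper never constructs highest weight vectors: using the presentation of $\mbf U_{\q}/\I_D$ from \cite[Cor.~3.13]{D03}, it notes that $\Psi(1_\lambda)=\mbf 1_\nu$ for $\lambda = dj_m - C\nu$ and reduces everything to the elementary observation that $X_\nu\neq\emptyset$ whenever $dj_m - C\nu\in\pi$, which follows at once from the necessary condition (\ref{necessary}).

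Your route is both harder and, as written, broken at its key point. The candidate you offer --- the characteristic function $\mbf 1_{\mathcal O}$ of the orbit with $V_{j_\beta}=0$ and $|U_{j_\beta}|=\nu_{j_\beta}$ --- is not a highest weight vector unless $\nu=0$. Under the left convolution action on column vectors one has
\[
(E_i\cdot\mbf 1_{\mathcal O})(U)=\sum_{U'\in\mathcal O,\; U\overset{i}{\subset}U'}\q^{-(|U'_{j_1}|-|U'_i|)},
\]
and as soon as $\nu_i>0$ the index set is nonempty (replace $U'_i$ by any hyperplane of it; the flag condition $V_{j_1}\subseteq U_i$ is vacuous since $V_{j_1}=0$), while every summand is a positive power of $\q$, so no cancellation can occur and $E_i\cdot\mbf 1_{\mathcal O}\neq 0$. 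The same positivity argument applies to each $E_a$. In fact for general $\lambda\in\pi$ no single orbit characteristic function can be a highest weight vector; the actual highest weight vectors of weight $\lambda$ in $\C'(X_\nu)$ are particular linear combinations, and producing one for every $\lambda\in\pi$ is a substantial computation that your proposal leaves entirely open (you explicitly defer it). Since $\C'(X)$ is a faithful $\C$-module, your strategy would succeed if those vectors were exhibited, but as it stands the proof is incomplete; you should either carry out that construction or switch to the paper's much lighter argument via the nonvanishing of the idempotents $\mbf 1_\nu$, i.e., the nonemptiness of the varieties $X_\nu$.
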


\begin{proof}
By Corollary 3.13 in ~\cite{D03}, the algebra $\mbf U_{\q}/\I_D$ has a  presentation by generators and relations.  
The generators are $E_a$, $F_a$ for any $a\in I$  and $1_{\mu}$ for $\mu \in W\pi$ where $W$ is the Weyl group of type $\mbf D_{m+2}$. 
If $\mu=\sum_{a\in I} \mu_a a$ (or $\sum_{a\in I} \mu_a \omega_a$ in ~\cite{D03}), we see that 
\[
\Psi (1_{\mu}) =\mbf  1_{\nu}, \quad \mbox{where} \; \mu = d j_m - C\nu. 
\] 
This is guaranteed by comparing  the defining relations of $\mbf U_{\q}/\I_D$ in ~\cite[1.3]{D03}  with the definition of $\mbf 1_{\nu}$. 
By Propositions 3.8 and 3.10 in ~\cite{D03}, we see that the algebra $\mbf U_{\q}/\I_D$ is a finite dimensional  semisimple algebra and
\begin{align}
\label{semisimple}
\mbf U_{\q}/\I_D \simeq \bigoplus_{\lambda\in \pi}  \End (L(\lambda)),
\end{align}
where $\pi$ is defined in (\ref{pi}). Since $\mbf U_{\q}/\I_D$ is semisimple, so is $\C$.  This implies that $\C$ has a decomposition similar to
(\ref{semisimple}) where the sum runs over a subset of $\pi$.  Moreover, the homomorphism $\Psi$ is compatible with such decompositions. 
In order to show that $\Psi$ is an isomorphism, we only need to show that $\Psi(1_{\lambda})= \mbf 1_{\nu}$ is non zero for any $\lambda =d j_m - C\nu \in \pi$.
It is reduced to show that the variety $X_{\nu} $ is non empty for any $\lambda=dj_m - C\nu  \in \pi$. 
Note that a necessary condition for $\nu\in \mbb N[I]$ to be in $\pi$ is (\ref{necessary}). 
For any $\nu \in \mbb N[I]$ subject to the condition (\ref{necessary}), the associated variety $X_{\nu}$ is always nonempty from the definition. The proof is complete. 
\end{proof}

\section{The algebra $\mathscr L_d$}

\subsection{} 

We shall show that a similar argument proves that 
when the quiver is symmetric of finite type, i.e., a simply-laced Dynkin
diagram, the algebra $\mathscr L_d$ in ~\cite[6.5]{Li12} is also a generalized $q$-Schur algebra. 
In this situation, the letter $d$ stands for an element in $\mbb N[I]$ where $I$ is the vertex set of the fixed quiver.
Let $C$ denote the Cartan matrix of the underlying graph  of the quiver. 
If we set 
\begin{equation}\label{pi-2}
\pi =\{ \mu\in \mbb N[I] | \mu = d - C  \nu, \quad \nu\in \mbb N[I]\}. 
\end{equation}
Then  a similar proof as that of Lemma ~\ref{factor} shows that the algebra homomorphism $\Psi_d$ in ~\cite[6.6]{Li12} factors through 
the integral form $_{\A} \mbf S(\pi)$ in ~\cite[8.1, 8.2]{D03} of the generalized $q$-Schur algebra $\mbf S(\pi)$ determined by  the Cartan matrix $C$ and the saturated set $\pi$. Moreover the induced algebra homomorphism $_{\A} \mbf S(\pi) \to \mathscr L_d$ is surjective and sends
the generators $1_{\mu} $ to the isomorphism class of the complex of sheaves $\mathscr I_ {\mu'}$ in ~\cite[5.1]{Li12} where $\mu' = d- \nu$ if 
$\mu = d- C\nu$.
 
 Just like the proof of Theorem ~\ref{main}, we only need to show that $\mathscr I_{\mu'}$ is non zero for any $\mu\in \pi$. 
Now the condition  $\mu = d-C\nu\in \pi$ is equivalent to the condition
\[
d_i +\sum_{j}  \nu_j -2\nu_i \geq 0,\quad \forall i\in I. 
\]  
where the sum runs over all $j\neq i$ such that $c_{ji} =-1$.  This immediately implies that  $d_i +\sum_{j}  \nu_j \geq \nu_i$ for any $i\in I$.
The latter condition guarantees that there is a nonzero element $P$ in $\mathscr D_{\mbf G}( \mbf E_{\Omega}(D, V))$ in ~\cite[6.8]{Li12} such that
the $I$-graded dimension of $V$ is $\nu$.  By the definition of $\mathscr I_{\mu'}$, we have 
$\mathscr I_{\mu'} P = P \neq 0$.  This property rules out the
possibility of $\mathscr I_{\mu'}=0$.  In summary, we
have proved

\begin{thm}
The algebra $\mathscr L_d$ in ~\cite[6.5]{Li12} is the integral form $_{\A} \mbf S(\pi)$ of a  generalized $q$-Schur algebra  in ~\cite[8.1, 8.2]{D03} 
(see also ~\cite{DS94}) where $\pi$ is defined in (\ref{pi-2}).
\end{thm}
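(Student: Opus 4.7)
The plan is to mirror the three-step strategy used for Theorem \ref{main}: first produce a surjective map from a generalized $q$-Schur algebra onto $\mathscr L_d$, then use a semisimplicity/decomposition argument to reduce injectivity to the nonvanishing of the images of idempotents, and finally verify the nonvanishing by an explicit geometric construction in the quiver variety setting of \cite{Li12}. Since here we are working with the integral form rather than a specialization, the underlying comparison is between the $\A$-algebra $_{\A}\mbf S(\pi)$ of \cite[8.1, 8.2]{D03} and $\mathscr L_d$, and the role that $\mbf U_{\q}$ played in Section 1 is now played by the Lusztig integral form acting via the homomorphism $\Psi_d$ of \cite[6.6]{Li12}.

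First I would prove the analogue of Lemma \ref{factor}: using the action of the $K_a$'s (or, in the integral picture, of the idempotents $1_\mu$) on the homogeneous pieces $\mathscr L_d^{\nu,\tilde\nu}$ indexed by $I$-graded dimensions, I would check that every $1_\mu$ with $\mu \notin \pi$ is sent to zero by $\Psi_d$, so that $\Psi_d$ descends to a map $_{\A}\mbf S(\pi) \to \mathscr L_d$. The identification $\Psi_d(1_\mu) = [\mathscr I_{\mu'}]$ with $\mu' = d-\nu$ whenever $\mu = d - C\nu$ is then forced by matching the defining relations of $_{\A}\mbf S(\pi)$ in \cite[1.3]{D03} with the construction of $\mathscr I_{\mu'}$ in \cite[5.1]{Li12}, exactly as in the specialized case. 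Surjectivity is inherited from the surjectivity of $\Psi_d$ itself.

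For injectivity I would invoke the decomposition provided by \cite[Propositions 3.8, 3.10]{D03}, which realizes $_{\A}\mbf S(\pi)$ as a direct sum of matrix algebras indexed by $\pi$ (integrally, after base change to $\mbb C(\q)$). This forces $\mathscr L_d$ to split as a direct sum over some subset of $\pi$, and the induced map is compatible with these decompositions, so it is an isomorphism as soon as $\mathscr I_{\mu'} \ne 0$ for every $\mu \in \pi$.

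The main obstacle, as in Section 1, is precisely this nonvanishing. For $\mu = d - C\nu \in \pi$ the condition $\mu_i \geq 0$ unpacks to $d_i + \sum_{j: c_{ji}=-1} \nu_j - 2\nu_i \geq 0$, and in particular $d_i + \sum_j \nu_j \geq \nu_i$. This is exactly the numerical inequality that guarantees the existence of a nonzero object $P \in \mathscr D_{\mbf G}(\mbf E_\Omega(D,V))$ of \cite[6.8]{Li12} with $\underline{\dim}\, V = \nu$. Once such a $P$ is exhibited, the identity $\mathscr I_{\mu'} P = P$ built into the definition of $\mathscr I_{\mu'}$ rules out $\mathscr I_{\mu'}=0$, completing the proof. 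The one subtlety worth checking carefully is that the indexing $\mu \leftrightarrow \mu' = d-\nu$ used for $\mathscr I_{\mu'}$ matches the parameterization of idempotents in $_{\A}\mbf S(\pi)$; after that, the rest is bookkeeping.
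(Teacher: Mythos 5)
Your proposal follows essentially the same route as the paper: factor $\Psi_d$ through $_{\A}\mbf S(\pi)$ by the analogue of Lemma \ref{factor}, identify $\Psi_d(1_\mu)$ with the class of $\mathscr I_{\mu'}$ for $\mu'=d-\nu$, reduce injectivity to the nonvanishing of these idempotent images via the decomposition of \cite[Propositions 3.8, 3.10]{D03}, and verify nonvanishing from the inequality $d_i+\sum_j \nu_j\geq \nu_i$ and the existence of a nonzero $P$ with $\mathscr I_{\mu'}P=P$. The one subtlety you flag in passing---that the semisimple decomposition is available only after base change from $\A$, so the injectivity step for the integral form needs this extra care---is present in the paper's argument as well, which likewise invokes the proof of Theorem \ref{main} without further comment.
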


\noindent
{\it Remark.}
If we  choose the quiver such that the associated Cartan matrix $C$ is of type $\mbf D_{m+2}$, i.e., the same as that  of Section ~\ref{typeD},
and the element $d$ is taken to be $\dim D  j_m$, then  the complexified algebra $\mbb C\otimes \mathscr  L_d$ ($v$ is specialized to $\q$) is isomorphic to $\C$. This is because both algebras are isomorphic to the same generalized $q$-Schur algebra of type $\mbf D_{m+2}$.  It will be very interesting to make a direct connection of the two algebras $\C$ and $\mathscr L_d$ in ~\cite{Li10} and ~\cite{Li12}, respectively.  
Note that in type $\mbf A_n$ case, the algebra $\mathscr L_d$  for certain $d$ is shown in ~\cite[Section 8]{Li12} to be isomorphic to the $q$-Schur algebra.

\section{A parametrization}

\subsection{}
If the ground field $\mbb F_q $ is replaced by its algebraic closure $\mbb F$, the set $X$ becomes an algebraic variety over $\mbb F$. 
 We shall fix a mistake  in  ~\cite{Li10}. 
The dimension of the connected component  $X_{\nu, \underline c}$ of $X$  in ~\cite[4.1]{Li10}  is 
\[
\sum_{a=i, k} \nu_a (\nu_{j_1} -\nu_a) + \sum_{\beta=1}^m \nu_{j_{\beta}} (\nu_{j_{\beta+1}} -\nu_{j_{\beta}})-
\sum_{\beta=1}^m (c_{\beta}-c_{\beta+1} ) (\nu_{j_{\beta+1}} - \nu_{j_{\beta}} -c_{\beta+1} + c_{\beta}).
\]

\subsection{}

Let $\Psi_2: Y\to X$ be the $set$-$theoretic$  map defined in ~\cite[4.2]{Li10}.  We set
\[
Y_{\nu, \underline c} =\Psi_2^{-1} (X_{\nu, \underline c}).
\]
We have a partition of $Y$ into locally closed subsets:
\[
Y =\sqcup_{\nu, \underline c} Y_{\nu, \underline c}.
\]

\begin{lem}
\label{vectorbundle}
The restriction $\Psi_{2,\underline c}: Y_{\nu, \underline c} \to X_{\nu, \underline c}$ of $\Psi_2 $ to $Y_{\nu,\underline c}$  is a vector bundle of fiber dimension 
\[
\sum_{\beta=1}^m (c_{\beta}-c_{\beta+1} ) (\nu_{j_{\beta+1}} - \nu_{j_{\beta}} -c_{\beta+1} + c_{\beta}).
\]
\end{lem}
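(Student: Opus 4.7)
The plan is to fix a point $U \in X_{\nu, \underline c}$ and describe $\Psi_{2,\underline c}^{-1}(U)$ explicitly by unravelling the definition of $Y$ and $\Psi_2$ from \cite[4.2]{Li10}. A point of $Y$ lying over $U$ is the flag $U$ equipped with additional linear-algebraic data at each ramification stage $\beta = 1, \dots, m$: concretely, for each $\beta$ one must choose an intermediate subspace (or, equivalently, a splitting) in a quotient determined by $U_{j_\beta}, U_{j_{\beta+1}}, V_{j_\beta}, V_{j_{\beta+1}}$, and the ramification index $\underline c$ pins down the precise rank of the new subspace being adjoined. The compatibility conditions are all linear in the new data once $U$ and the earlier choices are fixed, so at each stage the locus of admissible choices is an affine space over the residue field of the preceding data.

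Next, I would perform the dimension count stage by stage. At stage $\beta$, the size of the new subspace being adjoined should be $c_\beta - c_{\beta+1}$ (this is what the ramification index is measuring), and the ambient quotient in which this subspace is inserted should have dimension $\nu_{j_{\beta+1}} - \nu_{j_\beta} - c_{\beta+1} + c_\beta$, so that the Grassmannian-like choice contributes an affine space of dimension
\[
(c_{\beta}-c_{\beta+1}) (\nu_{j_{\beta+1}} - \nu_{j_{\beta}} -c_{\beta+1} + c_{\beta}).
\]
Because the successive fibrations are trivial affine bundles (each fiber after fixing earlier choices is an affine space and the transition only involves base change of vector spaces), summing over $\beta$ yields the claimed total fiber dimension. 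A cross-check: combined with the dimension formula for $X_{\nu,\underline c}$ stated in the preceding subsection, the sum telescopes to the expected dimension of $Y_{\nu,\underline c}$.

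To promote the constant-rank statement to a genuine vector bundle, I would use homogeneity. The group $\mathrm{GL}(D)$ acts transitively on the connected component $X_{\nu,\underline c}$, and the map $\Psi_{2,\underline c}$ is equivariant with respect to the natural lift of this action to $Y_{\nu,\underline c}$. Choose a base point $U_0 \in X_{\nu,\underline c}$ and identify its fiber with an affine space $A_0$ as above; then a local trivialization over a Zariski-open neighborhood of $U_0$ is obtained from a local section of $\mathrm{GL}(D) \to X_{\nu,\underline c}$, and homogeneity propagates the trivialization everywhere. The affine structure on fibers is preserved by the stabilizer of $U_0$ because the additional data is built from linear subspaces of $U_0$-dependent quotients, giving the required vector space structure on each fiber.

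The main obstacle I expect is the precise stage-by-stage dimension accounting: isolating the correct ambient quotient so that the offset $-c_{\beta+1} + c_\beta$ emerges naturally from the ramification constraint rather than by hand, and verifying that the successive choices really do decouple into an iterated affine bundle. Once that bookkeeping is in place, the vector-bundle upgrade is routine via the $\mathrm{GL}(D)$-equivariance.
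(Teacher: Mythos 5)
Your overall strategy --- fix $U\in X_{\nu,\underline c}$, identify the fiber with a linear space, count dimensions stage by stage --- is the right one, but as written the proposal has two genuine gaps. First, the central computation is asserted rather than derived: you say the new subspace at stage $\beta$ ``should'' have size $c_\beta-c_{\beta+1}$ and the ambient quotient ``should'' have dimension $\nu_{j_{\beta+1}}-\nu_{j_\beta}-c_{\beta+1}+c_\beta$, which is reverse-engineering the answer, and you yourself flag this bookkeeping as the unresolved main obstacle. The paper closes exactly this gap with one concrete linear-algebra fact: for $E=E_1\oplus E_2$ with prescribed $F_1\subseteq E_1$ and $F_2\subseteq E_2$, the set of subspaces $F\subseteq E$ with $F\cap E_1=F_1$ and $\pi_2(F)=F_2$ is canonically the vector space $\Hom(F_2,E_1/F_1)$. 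Applying this at each stage and imposing the two compatibility conditions from the definition of $\Psi_2$ identifies the fiber $\Psi_2^{-1}(U)$ with
\[
\bigoplus_{\beta=1}^m \Hom\bigl(V_{j_\beta}/V_{j_{\beta+1}},\, U_{j_{\beta+1}}/U_{j_\beta}\bigr),
\]
whose dimension is $\sum_\beta (c_\beta-c_{\beta+1})(\nu_{j_{\beta+1}}-\nu_{j_\beta}-c_{\beta+1}+c_\beta)$ since $c_\beta=|V_{j_\beta}|$ and $|U_{j_\beta}|=\nu_{j_\beta}-c_\beta$. Without some version of this identification you have not proved the lemma.

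Second, your mechanism for upgrading constant fiber dimension to a vector bundle rests on the claim that $\GL(D)$ acts transitively on the connected component $X_{\nu,\underline c}$, and that is false in general: the component is determined by $\nu$ and by the dimensions $c_\beta=|V_{j_\beta}|$ alone, while a $\GL(D)$-orbit in it is further pinned down by the relative position of $U_i$ and $U_k$ inside $U_{j_1}/V_{j_1}$ (e.g.\ by $\dim(U_i\cap U_k)$, which is only bounded below by $c_1$). So $X_{\nu,\underline c}$ is typically a union of several orbits and your homogeneity argument does not give local trivializations over all of it. The paper's fiber description makes this step unnecessary: the spaces $\Hom(V_{j_\beta}/V_{j_{\beta+1}}, U_{j_{\beta+1}}/U_{j_\beta})$ are the fibers of the Hom bundle built from the tautological subbundles on $X_{\nu,\underline c}$, so the vector space structure and local triviality come for free from the canonical (functorial in $U$) identification, with no group action needed.
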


\begin{proof}
We will use the following fact. Fix a decomposition $E=E_1\oplus E_2$ of a vector space. Let $F_1$ and $F_2$ be a subspace of $E_1 $ and $E_2$, respectively.  Let $\mathcal F$ be the collection of all subspaces $F$ in $E$ such that $F\cap E_1 =F_1$ and $\pi_2(F)=F_2$ where $\pi_2: E\to E_2$ is the natural projection. Then $\mathcal F$ is isomorphic to the vector space $\Hom (F_2, E_1/F_1)$. A bijection $\phi \mapsto F(\phi)$ of the two spaces is defined by 
\[
F(\phi) =\{ f_1+ f_2+\phi(f_2) | f_1\in F_1, f_2\in F_2\},
\]
where we fix a decomposition $E= F_1\oplus E_1/F_1\oplus F_2 \oplus E_2/F_2$.

By using this fact, we see that the fiber $\Psi_2^{-1}( U) $ is the same as the collection of linear maps 
$(\phi_{\beta})_{1\leq  \beta \leq  m}$ in $\oplus_{\beta=1}^{m} \Hom (V_{j_{\beta}}, D/ U_{j_{\beta}})$ such that 
\begin{align}
&\bar \sigma ( \mathcal V (\phi_{\beta}) ) \subseteq U_{j_{\beta+1}};  \tag{a}\\
&V_{j_{\beta+1}} \subseteq \mathcal V(\phi_{\beta}) \cap 0\oplus D, \quad \forall 1\leq \beta \leq m. \tag{b}
\end{align}
The condition (a) holds if and only if $\phi_{\beta}(v) \in U_{j_{\beta+1}}$ for  $1\leq \beta\leq m$. 
The condition (b) holds if and only if $\phi_{\beta}(v) =0$ for any $v\in V_{j_{\beta+1}}$. Therefore, the fiber $\Psi_2^{-1}(U)$ is isomorphic to the vector space 
\[
\oplus_{\beta=1}^m \Hom (V_{j_{\beta}}/V_{j_{\beta+1}}, U_{j_{\beta+1}}/ U_{j_{\beta}}).
\]
The Lemma follows.
\end{proof}

A consequence of Lemma ~\ref{vectorbundle} is that $Y_{\nu, \underline c}$ is connected and smooth.
From this Lemma, we have 
\[
\dim Y_{\nu, \underline c} =\dim X_{\nu, \underline c} + \sum_{\beta=1}^m (c_{\beta}-c_{\beta+1} ) (\nu_{j_{\beta+1}} - \nu_{j_{\beta}} -c_{\beta+1} + c_{\beta})=\dim Y_{\nu}.
\]
So we have 

\begin{prop}
The irreducible components of $Y_{\nu}$ are the closure   $ \overline{Y_{\nu, \underline c}}$  of $Y_{\nu, \underline c}$ for any sequence 
$ \underline c=(c_m, \cdots, c_1)$ of  non-decreasing and non-negative integers such that $ c_1\leq \min\{\nu_i, \nu_k\} $. 
\end{prop}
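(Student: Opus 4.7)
The plan is to combine the vector bundle structure from Lemma \ref{vectorbundle} with the known description of the irreducible components of $X_\nu$ from \cite[4.1]{Li10}. Recall that $X_\nu = \sqcup_{\underline c} X_{\nu,\underline c}$, where $\underline c$ ranges over the sequences of non-decreasing non-negative integers with $c_1\le\min\{\nu_i,\nu_k\}$, and that each $X_{\nu,\underline c}$ is a connected (in fact irreducible and smooth) locally closed subvariety of $X_\nu$ of the dimension recorded above. Restricting the partition $Y=\sqcup Y_{\nu,\underline c}$ to the fiber over $X_\nu$, we obtain a finite partition
\[
Y_\nu \;=\; \bigsqcup_{\underline c} Y_{\nu,\underline c}
\]
into locally closed subvarieties indexed by exactly the same finite collection of sequences $\underline c$.

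Next I would read off irreducibility, smoothness, and the dimension count from Lemma \ref{vectorbundle}. Because $\Psi_{2,\underline c}\colon Y_{\nu,\underline c}\to X_{\nu,\underline c}$ is a vector bundle of constant rank over an irreducible smooth base, the total space $Y_{\nu,\underline c}$ is itself irreducible and smooth; and the dimension computation in the paragraph preceding the proposition shows that $\dim Y_{\nu,\underline c} = \dim Y_\nu$ for every valid $\underline c$. Consequently each closure $\overline{Y_{\nu,\underline c}}$ is an irreducible closed subvariety of $Y_\nu$ whose dimension equals the dimension of the ambient $Y_\nu$.

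To finish, I would argue that these maximal-dimensional irreducible closed subsets are precisely the irreducible components. Since the $Y_{\nu,\underline c}$ form a finite partition of $Y_\nu$, the equality
\[
Y_\nu \;=\; \bigcup_{\underline c}\, \overline{Y_{\nu,\underline c}}
\]
expresses $Y_\nu$ as a finite union of irreducible closed sets, each of maximal dimension in $Y_\nu$. No $\overline{Y_{\nu,\underline c}}$ can be contained in another, since the strata $Y_{\nu,\underline c}$ are pairwise disjoint and each has the same dimension as its own closure; an inclusion $\overline{Y_{\nu,\underline c}}\subseteq \overline{Y_{\nu,\underline c'}}$ with $\underline c\ne\underline c'$ would force $Y_{\nu,\underline c}$ to meet the boundary $\overline{Y_{\nu,\underline c'}}\setminus Y_{\nu,\underline c'}$, which has strictly smaller dimension. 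Hence the $\overline{Y_{\nu,\underline c}}$ are pairwise incomparable maximal irreducible closed subsets, i.e.\ the irreducible components of $Y_\nu$.

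The main obstacle I anticipate is the verification that the strata $X_{\nu,\underline c}$ are actually the irreducible components of $X_\nu$ (and in particular irreducible, not merely connected), together with the bookkeeping of the index range $c_1\le\min\{\nu_i,\nu_k\}$ and monotonicity of $\underline c$; these facts are established in \cite[4.1]{Li10} and I would quote them rather than reprove them. Once that input is in place, the vector bundle structure of Lemma \ref{vectorbundle} transfers the component description from $X_\nu$ to $Y_\nu$ essentially for free.
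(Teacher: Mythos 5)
Your proposal is correct and follows essentially the same route as the paper: the paper likewise deduces from Lemma \ref{vectorbundle} that each $Y_{\nu,\underline c}$ is connected and smooth (hence irreducible), notes that $\dim Y_{\nu,\underline c}=\dim X_{\nu,\underline c}+(\text{fiber dimension})=\dim Y_{\nu}$ for every admissible $\underline c$, and concludes that the closures of these equidimensional locally closed strata are the irreducible components. Your explicit check that no closure contains another is a detail the paper leaves implicit, but it is the standard argument and adds nothing different in substance.
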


\section{Acknowledgements}

Y.~Li thanks Professor Leonard Scott for asking whether the algebra $\C$ is
a generalized $q$-Schur algebra during his visit to University of Virginia
in 2010.  S.~Doty is partially supported by the Simons Foundation:
Collaboration Grant 245975 and Y.~Li is partially supported by the NSF
grant: DMS 1160351.

\end{document}